\documentclass[11pt]{article}
%DIF LATEXDIFF DIFFERENCE FILE

%DIF 2c2
%DIF < \usepackage{amsmath, amssymb, amsthm, verbatim,enumerate,bbm, color}
%DIF -------
\usepackage{amsmath, amssymb, amsthm} %DIF >
%DIF -------

\parindent 5mm
\parskip 0.2mm
\oddsidemargin  0pt \evensidemargin 0pt \marginparwidth 0pt
\marginparsep 0pt \topmargin 0pt \headsep 0pt \textheight 8.8in
\textwidth 6.8in

\allowdisplaybreaks

\theoremstyle{plain}
\newtheorem{theorem}{Theorem}[section]
\newtheorem{lemma}[theorem]{Lemma}

\newtheorem{corollary}[theorem]{Corollary}

%Hypergraphs

%Letters

% \newcommand{\V}{\mathcal V}

\newcommand{\poly}{\text{poly}}
%DIF PREAMBLE EXTENSION ADDED BY LATEXDIFF
%DIF UNDERLINE PREAMBLE %DIF PREAMBLE
\RequirePackage[normalem]{ulem} %DIF PREAMBLE
\RequirePackage{color}\definecolor{RED}{rgb}{1,0,0}\definecolor{BLUE}{rgb}{0,0,1} %DIF PREAMBLE
 %DIF PREAMBLE
                      %DIF PREAMBLE
%DIF SAFE PREAMBLE %DIF PREAMBLE
 %DIF PREAMBLE
 %DIF PREAMBLE
 %DIF PREAMBLE
 %DIF PREAMBLE
%DIF FLOATSAFE PREAMBLE %DIF PREAMBLE
 %DIF PREAMBLE
 %DIF PREAMBLE
 %DIF PREAMBLE
 %DIF PREAMBLE
 %DIF PREAMBLE
 %DIF PREAMBLE
%DIF END PREAMBLE EXTENSION ADDED BY LATEXDIFF

\title{Modifying a Graph's Degree Sequence and the Testablity of Degree Sequence Properties}

%\title{Modifying a Graph's Degree Sequence: Every Degree-Sequence Property is Testable}

\author{Lior Gishboliner\thanks{ETH Zurich. 
		Email: lior.gishboliner@math.ethz.ch.}
	}

\begin{document}
	\maketitle
	
	\begin{abstract}
		We show that if the degree sequence of a graph $G$ is close in $\ell_1$-distance to a given realizable degree sequence $(d_1,\dots,d_n)$, then $G$ is close in edit distance to a graph with degree sequence $(d_1,\dots,d_n)$. We then use this result to prove that every graph property defined in terms of the degree sequence is testable in the dense graph model with query complexity independent of $n$.  
	\end{abstract}
	
	\section{Introduction}\label{sec:intro}
	Our main result in this paper is concerned with efficiently modifying a graph (i.e. adding/deleting edges) so as to obtain a graph with a prescribed degree sequence. Let us begin by introducing the relevant definitions. 
	For convenience, we let the vertex-set of $n$-vertex graphs be $[n]$. The degree sequence of an $n$-vertex graph $G$ is $(d_G(1),\dots,d_G(n))$, where $d_G(i)$ is the degree of $i$ in $G$. A sequence $(d_1,\dots,d_n)$ is called {\em realizable} or {\em graphic} if there is an $n$-vertex graph $G$ such that $d_G(i) = d_i$ for every $1 \leq i \leq n$. Graphic sequences are a classical object of study, and there are several famous theorems characterizing them, such as the Erd\H{o}s-Gallai theorem \cite{EG} and the Havel-Hakimi theorem \cite{Hakimi,Havel}. See \cite{HS} for more information. 
	The normalized $\ell_1$-distance between sequences $x = (x_1,\dots,x_n)$ and $y = (y_1,\dots,y_n)$ is defined as $\ell_1(x,y) := \frac{1}{n^2} \cdot \sum_{i = 1}^{n}|x_i - y_i|$. Let $G,G'$ be graphs on $[n]$. The (normalized) edit distance between $G$ and $G'$ is defined as $|E(G) \triangle E(G')|/n^2$. 
	%We say that $G,G'$ are {\em $\varepsilon$-close} if their distance is at most $\varepsilon$, and {\em $\varepsilon$-far} otherwise. 
	
	We consider the following natural question: given that the degree sequence of a graph $G$ is close in $\ell_1$-distance to some realizable sequence $(d_1,\dots,d_n)$, how close is $G$ (in edit distance) to a graph whose degree sequence is $(d_1,\dots,d_n)$? As far as we know, this question has so far only been considered for the constant sequence $(r,\dots,r)$, which evidently corresponds to an $r$-regular graph (see \cite[Claim 8.5.1]{Goldreich}). Here we obtain a general result for all degree sequences:  
	
	\begin{theorem}\label{thm:distance}
		Let $\delta > 0$ and $n \geq \delta^{-2}$, let $G$ be a graph on $[n]$, let $(d_1,\dots,d_n)$ be a realizable sequence, 
		and suppose that $\sum_{i \in V(G)}{|d_G(i) - d_i|} \leq \delta n^2$.
		Then there is a graph $G'$ on $V(G) = [n]$ such that $|E(G') \triangle E(G)| = O(\delta^{1/2}n^2)$ and $d_{G'}(i) = d_i$ for every $i \in [n]$. 
	\end{theorem}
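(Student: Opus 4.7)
The plan is a heavy/light decomposition. Set $f(v) := d_G(v) - d_v$, and let $L := \{v : |f(v)| \geq \sqrt{\delta}\, n\}$ be the set of ``heavy'' vertices; since $\sum_v |f(v)| \leq \delta n^2$, Markov's inequality gives $|L| \leq \sqrt{\delta}\, n$. Fix any realization $H$ of $(d_1,\dots,d_n)$, which exists by assumption.

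First, I would modify $G$ so that every edge position incident to $L$ matches $H$: for each $u \in L$ and $v \in [n] \setminus \{u\}$, set $uv \in E(G')$ iff $uv \in E(H)$. The number of added or deleted edges is at most $|L| \cdot n = O(\sqrt{\delta}\, n^2)$. After this step, every $v \in L$ has the correct degree $d_v$, and for $v \notin L$ the residual imbalance $\tilde f(v) := d_{G'}(v) - d_v$ satisfies $|\tilde f(v)| \leq |f(v)| + |L| \leq 2\sqrt{\delta}\, n$ (the term $|L|$ bounds the change in $d_G(v)$ incurred on edges between $v$ and $L$). Moreover, the target degree sequence on $V \setminus L$, namely $d_v - |N_H(v) \cap L|$ for $v \notin L$, is realized by $H|_{V\setminus L}$ and is therefore graphic.

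Second, I would fix the residual imbalance within $V \setminus L$ via \emph{swap} operations: to shift one unit of degree from a vertex $v$ with $\tilde f(v) > 0$ to a vertex $w$ with $\tilde f(w) < 0$, find $u \in V \setminus L \setminus \{v,w\}$ with $uv \in E(G')$ and $uw \notin E(G')$, and replace the edge $uv$ with the edge $uw$. Each swap costs two edge modifications and reduces the total residual imbalance by $2$, so the total cost of the second step is at most $\sum_{v \notin L} |\tilde f(v)| \leq (n-|L|) \cdot 2\sqrt{\delta}\, n = O(\sqrt{\delta}\, n^2)$. Combining the two steps yields the desired $O(\sqrt{\delta}\, n^2)$ bound.

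The main obstacle is certifying that the swaps in the second step can always be performed. A direct swap $v \to w$ is blocked precisely when $N(v) \setminus \{w\} \subseteq N(w)$. I would resolve this by first running a greedy cleanup inside $V \setminus L$---deleting edges inside the surplus set $\{v : \tilde f(v) > 0\}$ and adding non-edges inside the deficit set $\{v : \tilde f(v) < 0\}$---so that afterwards the surplus set is independent and the deficit set is a clique; a degree-counting argument in the spirit of the regular case (Goldreich's Claim 8.5.1) then certifies that the required swaps exist in the cleaned graph. Any persistent blockage can be handled by a short chain of two or three swaps, which adds only a constant overhead per unit of fixed imbalance and preserves the overall $O(\sqrt{\delta}\, n^2)$ bound.
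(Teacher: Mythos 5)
Your Step 1 is fine: the Markov bound $|L| \le \sqrt{\delta}\,n$, the cost $|L|\cdot n$ of overwriting all edges at $L$ by those of a fixed realization $H$, and the observation that the residual target $(d_v - |N_H(v)\cap L|)_{v\notin L}$ is graphic because $H[V\setminus L]$ realizes it, are all correct and within the $O(\sqrt{\delta}\,n^2)$ budget. The genuine gap is Step 2, and it is exactly where the whole difficulty of the theorem sits. You assert that each unit of residual imbalance can be repaired at $O(1)$ edge edits (a swap, or ``a short chain of two or three swaps''), but you never prove that the required pivots or short chains exist. The degree-counting that works for Goldreich's Claim 8.5.1 relies crucially on all target degrees being the same value $r$; with heterogeneous targets, a surplus vertex $v$ with small (target) degree and a deficit vertex $w$ with large degree can easily satisfy $N(v)\setminus\{w\}\subseteq N(w)$, and nothing in your cleanup rules this out or bounds the length/cost of the detour needed. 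The cleanup itself is also not sound as stated: making the surplus set independent may delete more edges at a vertex than its surplus (turning it into a deficit vertex), so it can create new imbalance rather than only removing it; and since the total surplus need not equal the total deficit, endpoint-shifting swaps alone cannot finish in any case. Most tellingly, a general statement of the form ``a graph whose degree sequence is at unnormalized $\ell_1$-distance $T$ from a graphic sequence can be repaired with $O(T)$ edits'' is essentially the linear-dependence strengthening that the paper explicitly leaves open (it is known only for constant sequences), and your situation after Step 1 (per-vertex imbalance at most $2\sqrt{\delta}\,n$, graphic residual target) is not visibly easier---you give no argument that uses this extra structure.

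For contrast, the paper sidesteps the repair-by-swaps issue entirely: it takes a realization $G'$ of $(d_1,\dots,d_n)$ minimizing $|E(G')\triangle E(G)|$, colours the symmetric difference red/blue, observes that minimality forbids alternating cycles, and then proves (via the Grossman--H\"aggkvist theorem plus an ordering/counting lemma) that any alternating-cycle-free $2$-coloured graph whose red/blue degree discrepancies sum to at most $\delta n^2$ has only $O(\delta^{1/2}n^2)$ edges; this is also where the hypothesis $n\ge \delta^{-2}$ is used. To salvage your outline you would need an actual proof that, in the cleaned-up graph on $V\setminus L$, every remaining unit of imbalance can be discharged at bounded amortized cost---which currently is the theorem, not a lemma.
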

	Note that Theorem \ref{thm:distance} gives a quadratic dependence; if the $\ell_1$-distance between the degree sequences is $\delta$, then the edit distance between $G$ and $G'$ is $O(\delta^{1/2})$. In the special case where $(d_1,\dots,d_n)$ is a constant sequence (namely, where we consider the distance to being $r$-regular for some $r$), a better, linear dependence has been proved (see Claim 8.5.1 in \cite{Goldreich}). 
%	Let us mention that unlike the quadratic dependence in the above theorem (between the normalized $\ell_1$-distance of $(d_1,\dots,d_n)$ and $(d_G(1),\dots,d_G(n))$ and the normalized edit distance of $G$ and $G'$), the aforementioned special case of $r$-regular graphs holds with linear dependence.
%	For a proof, see 
%	% \cite[Claim 8.5.1]{Goldreich}.
%	Claim 8.5.1 in \cite{Goldreich}.
	We wonder if one can obtain a linear dependence for all degree sequences, thus proving Theorem \ref{thm:distance} with an optimal bound.
	%the general Theorem \ref{thm:distance} can be improved to give a linear bound as well.
	
%	the special case of Theorem \ref{thm:distance} in which $d_1 = \dots = d_n$ was already established in \cite{}, and with a linear dependence, namely $|E(G') \triangle E(G)| = O(\delta n^2)$, instead of the square-root dependence (on $\delta$) in Theorem \ref{thm:distance}. 
	
	\subsection{Every degree-sequence property is testable}
	
%	Let $G,G'$ be graphs on $[n]$. The {\em edit distance} (or just distance) between $G$ and $G'$ is defined as $|E(G) \triangle E(G')|/n^2$. We say that $G,G'$ are {\em $\varepsilon$-close} if their distance is at most $\varepsilon$, and {\em $\varepsilon$-far} otherwise. 
	
%	This paper is concerned with testing graph properties defined in terms of the degree sequence. Several particular properties of this kind were already studied in previous works. 
%	??? degree-regularity, eulerianity ???
	
	Theorem \ref{thm:distance} has an immediate application to {\em graph property testing}. This is an area of theoretical computer science concerned with the design of fast, randomized algorithms which distinguish graphs satisfying a certain property from graphs which are far from the property. In the so-called {\em dense graph model}, which is the setting considered here, the measure of distance is simply the (normalized) edit distance defined above. 
%	More precisely, we say that a graph $G$ is {\em $\varepsilon$-far} from a graph property $\mathcal{P}$ if the distance of $G$ to any graph satisfying $\mathcal{P}$ is larger than $\varepsilon$; otherwise, $G$ is {\em $\varepsilon$-close} to $\mathcal{P}$. 
	More precisely, we say that a graph $G$ is {\em $\varepsilon$-close} to a graph property $\mathcal{P}$ if there is a graph satisfying $\mathcal{P}$ whose edit distance to $G$ is at most $\varepsilon$; otherwise, $G$ is {\em $\varepsilon$-far} to $\mathcal{P}$. 
	A {\em tester} for $\mathcal{P}$ is an algorithm which, given an input consisting of a graph $G$ and a proximity parameter $\varepsilon > 0$, accepts with probability at least $\frac{2}{3}$ if $G$ satisfies $\mathcal{P}$, and rejects with probability at least $\frac{2}{3}$ if $G$ is $\varepsilon$-far from $\mathcal{P}$. The tester accesses the graph $G$ by making edge-queries to its adjacency matrix, and may also sample random vertices of $G$. As is customary, we assume that $n$, the number of vertices of the input graph, is given to the tester as part of the input. 
	We say that a tester has {\em query complexity} $q(n,\varepsilon)$ if it makes at most $q(n,\varepsilon)$ edge-queries whenever invoked with an $n$-vertex input graph and with proximity parameter $\varepsilon$. In the present paper, we focus on testers whose query complexity is independent of $n$, namely, can be bounded by a function $q(\varepsilon)$ of $\varepsilon$ alone. In such a case, we will say that the query complexity of the tester is $q(\varepsilon)$. 
	The study of graph property testers was initiated in the seminal paper of Goldreich, Goldwasser and Ron \cite{GGR}. 
	We must mention that property testing is a much wider area of research than is presented here, and that
	apart from the dense graph model considered in this paper, property testers have been extensively studied in numerous other settings. For an overview of the field, we refer the reader to the book of Goldreich \cite{Goldreich}. 
	
%	The present paper constitutes the first general treatment of degree sequence properties. Before stating our results, let us introduce some definitions.
%	For convenience, we let the vertex-set of $n$-vertex graphs be $[n]$. The degree sequence of an $n$-vertex graph $G$ is $(d_G(1),\dots,d_G(n))$, where $d_G(i)$ is the degree of $i$ in $G$. A sequence $(d_1,\dots,d_n)$ is called {\em realizable} or {\em graphic} if there is an $n$-vertex graph $G$ such that $d_G(i) = d_i$ for every $1 \leq i \leq n$. Graphic sequences are a classical object of study, and there are several famous theorems characterizing them, such as the Erd\H{o}s-Gallai theorem \cite{EG} and the Havel-Hakimi theorem \cite{}. See \cite{} for more information. 
	
	In the present paper we are concerned with properties defined in terms of the degree sequence. 
	The precise definitions are as follows. 
	A {\em degree-sequence property} is a set $\mathcal{D}$ of realizable sequences that is closed under permuting the coordinates, which means that for every $(d_1,\dots,d_n) \in \mathcal{D}$ and every permutation $\pi : [n] \rightarrow [n]$, the permuted sequence $(d_{\pi(1)},\dots,d_{\pi(n)})$ is also in $\mathcal{D}$. The graph property defined by $\mathcal{D}$, which we denote by $\mathcal{P}(\mathcal{D})$, is the set of all graphs whose degree sequence is in $\mathcal{D}$. 
	% Our main result in this paper is as \nolinebreak follows:
	Our main result establishes that {\em every} degree-sequence property is testable with query complexity which is independent of $n$, and moreover depends only polynomially on $\varepsilon$. 
	\begin{theorem}\label{thm:main}
		Every degree property is testable with query complexity $q(\varepsilon) = \poly(1/\varepsilon)$. 
	\end{theorem}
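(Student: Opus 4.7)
The goal is to build an edge-query tester with $\poly(1/\varepsilon)$ complexity whose decision depends only on a sampled approximation to the \emph{multiset} of normalized degrees of $G$. Because $\mathcal{D}$ is closed under permutations, membership of $G$ in $\mathcal{P}(\mathcal{D})$ depends only on this multiset, which I encode as the empirical probability measure $\mu_G := \tfrac{1}{n}\sum_{v\in[n]}\delta_{d_G(v)/n}$ on $[0,1]$; each $\vec{d}\in\mathcal{D}$ similarly gives a measure $\mu_{\vec{d}}$. A standard fact is that $\min_\pi \tfrac{1}{n^2}\sum_i |d_G(i)-d_{\pi(i)}|$ equals the Wasserstein-$1$ distance $W_1(\mu_G,\mu_{\vec{d}})$, since optimal transport on the real line is realized by matching sorted sequences. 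The key consequence of Theorem~\ref{thm:distance} is its contrapositive: if $G$ is $\varepsilon$-far from $\mathcal{P}(\mathcal{D})$, then for every $\vec{d}\in\mathcal{D}$ one has $W_1(\mu_G,\mu_{\vec{d}})\geq \delta_\varepsilon:=c\varepsilon^2$ for an absolute constant $c>0$; otherwise Theorem~\ref{thm:distance} would exhibit a graph with degree sequence in $\mathcal{D}$ at edit distance less than $\varepsilon$ from $G$.

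The tester I propose: sample a uniform set $S\subseteq[n]$ of size $s=\poly(1/\varepsilon)$; for each $v\in S$, estimate $p_v:=d_G(v)/n$ by querying $v$'s adjacency to a fresh uniform set $T_v$ of size $t=\poly(1/\varepsilon)$, obtaining $\hat{p}_v$; form $\hat\mu_S := \tfrac{1}{s}\sum_{v\in S}\delta_{\hat{p}_v}$; accept iff some $\vec{d}\in\mathcal{D}$ satisfies $W_1(\hat\mu_S,\mu_{\vec{d}})\leq \delta_\varepsilon/2$. The final existential check is information-theoretic and may be non-constructive, which is permissible since testability is defined via query complexity alone. The total number of queries is $s\cdot t=\poly(1/\varepsilon)$; if $n$ is smaller than some $n_0(\varepsilon)=\poly(1/\varepsilon)$ (which is needed also for the hypothesis $n\geq\delta^{-2}$ of Theorem~\ref{thm:distance} to apply), the tester simply reads the whole adjacency matrix at cost $\poly(1/\varepsilon)$.

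Correctness rests on two standard concentration facts. Let $\tilde\mu_S:=\tfrac{1}{s}\sum_{v\in S}\delta_{p_v}$ denote the empirical measure of the \emph{true} degrees at $S$. First, $\mathbb{E}[W_1(\tilde\mu_S,\mu_G)] = O(s^{-1/2})$ for measures on $[0,1]$, so with $s=\Theta(\delta_\varepsilon^{-2})$ Markov's inequality gives $W_1(\tilde\mu_S,\mu_G)\leq \delta_\varepsilon/8$ with probability $\geq 5/6$. Second, by Hoeffding and a union bound with $t=\Theta(\delta_\varepsilon^{-2}\log s)$, all estimates satisfy $|\hat{p}_v-p_v|\leq \delta_\varepsilon/8$ with probability $\geq 11/12$; the diagonal coupling $\tfrac{1}{s}\sum_{v\in S}\delta_{(\hat{p}_v,p_v)}$ then gives $W_1(\hat\mu_S,\tilde\mu_S)\leq \delta_\varepsilon/8$. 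The triangle inequality yields $W_1(\hat\mu_S,\mu_G)\leq \delta_\varepsilon/4$ with probability $\geq 2/3$. Completeness follows by plugging $\vec{d}=\vec{d}_G\in\mathcal{D}$ into the acceptance condition; soundness follows by combining the triangle inequality with the contrapositive of Theorem~\ref{thm:distance}. The only non-routine input is the empirical Wasserstein convergence rate on $[0,1]$, which is classical (and can alternatively be argued via the CDF representation $W_1(\mu,\nu)=\int_0^1|F_\mu(t)-F_\nu(t)|\,dt$ together with a DKW-type bound); beyond invoking this, no further obstacle arises, since the heavy lifting is done by Theorem~\ref{thm:distance}.
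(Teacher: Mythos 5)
Your proposal is correct and follows essentially the same route as the paper: sample $\poly(1/\varepsilon)$ vertices, estimate each sampled vertex's degree by further sampling, use the resulting approximation of the degree distribution to check (by a possibly brute-force, query-free comparison) whether it is close in $\ell_1$ to some sequence in $\mathcal{D}$, and invoke Theorem~\ref{thm:distance} together with the triangle inequality for soundness. The only difference is cosmetic: where you phrase the approximation via empirical measures, $W_1$, and a DKW/CDF bound, the paper's Lemma~\ref{lem:degree statistic} builds an explicit bucketed ``degree statistic'' and Theorem~\ref{thm:testing} formalizes the comparison step as a promise-decision subroutine, but the tester, its analysis, and the $\poly(1/\varepsilon)$ query bound coincide.
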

%	It is also worth noting that the query complexity of our testers has (only) polynomial dependence on $\varepsilon$.
%	\noindent
%	The dependence of the query complexity of our tester on $\varepsilon$ is (only) polynomial. 

	A significant caveat to Theorem \ref{thm:main} is that while the {\em query complexity} of the tester supplied by this theorem is independent of $n$, its {\em time complexity} may depend on $n$, and in some cases quite badly; it may be as large as exponential in $n$. The reason is that our tester for $\mathcal{P}(\mathcal{D})$ works by first constructing a bounded-size approximation of the degree sequence (of size depending only on $\varepsilon$), and then checking whether this approximation is close (in $\ell_1$-distance) to one of the sequences in $\mathcal{D}$. This second step is the source of (possibly) large time complexity: for general degree-sequence properties, we are not aware of a way of performing this check more efficietly than simply going over all sequences in $\mathcal{D}$. The required run-time may thus be as large as the number of $n$-term realizable sequences, which is at least exponential in $n$ 
%	\cite{Burns} (even if we only consider unlabeled graphs). 
	(see, for example, \cite{Burns}).
	Still, for some specific degree-sequence properties $\mathcal{D}$, this task can be done much faster, in time which depends only on $\varepsilon$. 
%	One example is the case that $\mathcal{D}$ contains only one $n$-term sequence: the constant sequence $(r,\dots,r)$ (for some $r = r(n)$). Evidently, this corresponds to the property of being an $r$-regular graph.   
%	We should mention that this property has already been shown to be testable with query- and time-complexity depending only on $\varepsilon$ (and in fact with a better dependence than is given by our general Theorem \ref{thm:main}), see \cite[Section 8.2.3]{Goldreich}. 
	One example is the case that $\mathcal{P}(\mathcal{D})$ is the property of being $r$-regular (for some $r = r(n)$); see the explanation at the end of Section \ref{sec:test}.
	%, namely, that the only $n$-term sequence in $\mathcal{D}$ is the constant sequence $(r,\dots,r)$. 
	% or, more generally, that $\mathcal{D}$ contains only one $n$-term sequence, which has only a constant number of distinct terms. For such $\mathcal{D}$, the above check simply requires one to compute the $\ell_1$-distance between the unique $n$-term sequence in $\mathcal{D}$ and (the $n$-term sequence corresponding to) the aforementioned bounded-size approximation.
	We should mention that, as demonstrated in \cite[Section 8.2.3]{Goldreich}, $r$-regularity was already known to be testable with query- and time-complexity depending only on $\varepsilon$ (and in fact with a better dependence than is given by our general Theorem \ref{thm:main}).
	% For other examples of degree properties for which the run-time is independent of $n$, see the discussion at the end of Section \ref{sec:test}. 
%	Other examples (of time-complexity independent of $n$) are properties of the form 
%	We wonder if the run-time is independent of $n$ whenever $\mathcal{D}$ can be described independently of $n$, e.g., by specifying the fraction of vertices whose degree belongs to the interval $[(\ell-1) n/k, \ell n/k]$ for every $1 \leq \ell \leq k$, where $k$ is independent of $n$  
	It may be interesting to find general families of degree-sequence properties for which the run-time is independent of $n$. On the other hand, it seems likely that dependence on $n$ is unavoidable if one wishes to handle all degree-sequence properties. It may be interesting to determine if there are cases where exponential dependence is necessary. 
	
	\paragraph{Paper organization} Theorem \ref{thm:distance} is proved in Section \ref{sec:distance}, and Theorem \ref{thm:main} is proved in Section \ref{sec:test}, where it is restated as Theorem \ref{thm:testing}. 

	\section{Modifying the Degree Sequence}\label{sec:distance}
	% Our main result here is as follows. 
	%	\begin{theorem}\label{thm:distance}
	%		Let $\varepsilon \in (0,1)$, let $n \geq \dots$, let $G$ be a graph on $[n]$ and let $(d_1,\dots,d_n)$ be a realizable sequence such that $\sum_{i \in V(G)}{|d_G(i) - d_i|} \leq \frac{1}{2} \varepsilon n^2$. Then $G$ is $\varepsilon$-close to a graph $G'$ satisfying $d_{G'}(i) = \delta_i$ for every $i \in V(G)$. 
	%	\end{theorem}
	
%	\begin{theorem}\label{thm:distance}
%		Let $\delta > 0$ and $n \geq n_0(\delta) = \delta^{-2}$, let $G$ be a graph on $[n]$, let $(d_1,\dots,d_n)$ be a realizable sequence, 
%		and suppose that $\frac{1}{n^2}\sum_{i \in V(G)}{|d_G(i) - d_i|} \leq \delta n^2$.
%		Then there is a graph $G'$ on $V(G) = [n]$ such that $|E(G') \triangle E(G)| = O(\delta^{1/2}n^2)$ and $d_{G'}(i) = d_i$ for every $i \in [n]$. 
%	\end{theorem}
	
%	Let us mention that the special case of Theorem \ref{thm:distance} in which $d_1 = \dots = d_n$ was already established in \cite{}, and with a linear dependence, namely $|E(G') \triangle E(G)| = O(\delta n^2)$, instead of the square-root dependence (on $\delta$) in Theorem \ref{thm:distance}. We wonder if Theorem \ref{thm:distance} too can be improved to give a linear \nolinebreak bound.
	
	In the proof of Theorem \ref{thm:distance} we will consider 2-edge-coloured graphs, namely graphs whose edges are coloured by red and blue. For a vertex $v$ in a $2$-edge-coloured graph $G$, we denote by $N^R_G(v)$ (resp. $N^B_G(v)$) the red (resp. blue) neighbourhood of $v$ in $G$; namely, $$N^R_G(v) = \{u \in V(G) : \{u,v\} \in E(G) \text{ is coloured red}\},$$ and similarly for blue. We will also put $d^R_G(v) = |N^R_G(v)|$ and $d^B_G(v) = |N^B_G(v)|$. 
%	The notation $N_G(v)$ (with no superscript indicating the color) means the ``entire'' neighbourhood of $v$, namely $N_G(v) = N_G^R(v) \cup N_G^B(v)$. 
	When there is no superscript indicating the colour, we mean that both colours are accounted; so $N_G(v) = N_G^R(v) \cup N_G^B(v)$ and 
	$d_G(v) = d_G^R(v) + d_G^B(v)$.  
	The subscript $G$ will be omitted when there is no risk of confusion.  
%	degree by $d^R(v)$ (resp. $d^B(v)$). For a vertex set $U$, we denote by $d^R_U(v)$ (resp. $d^B_U(v)$) the number of vertices in $U$ connected to $v$ by a red (resp. blue) edge. 
	For a set $U \subseteq V(G)$, we will denote by $G[U]$ the (2-edge-coloured) subgraph of $G$ induced by $U$. We will use $e(U)$ to denote the number of edges inside a vertex-set $U$, and $e(U,W)$ to denote the number of edges between vertex-sets $U,W$.
	All logarithms are base $2$.
	
	An {\em alternating cycle} in a 2-edge-coloured graph is a cycle whose edge have alternating colours (in particular, such a cycle must be even). 
%	We will use the following weaker form of the main result of Grossman and H\"{a}ggkvist \cite{GH}. 
	We will use the following result of Grossman and H\"{a}ggkvist \nolinebreak \cite{GH}. 
	\begin{theorem}[\cite{GH}]\label{thm:alternating_cycle}
			Let $F$ be a $2$-connected $2$-edge-coloured graph in which $d^R(v),d^B(v) \geq 1$ for every $v \in V(F)$. Then $F$ contains an alternating cycle. 
	\end{theorem}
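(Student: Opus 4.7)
My plan is a proof by contradiction, via a P\'osa-style rotation argument adapted to alternating paths. Suppose $F$ contains no alternating cycle, and fix a longest alternating path $P = v_0 v_1 \dots v_k$ in $F$; since every vertex has both red and blue degree at least $1$, we have $k \geq 1$. Without loss of generality $v_0v_1$ is red.

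The first step is to use maximality of $P$ to locate the blue neighbors of $v_0$. By $d^B(v_0) \geq 1$, there is a blue neighbor $w$ of $v_0$; if $w \notin V(P)$ then $w v_0 v_1 \dots v_k$ is a strictly longer alternating path (its first edge $wv_0$ is blue, followed by the red edge $v_0v_1$), contradicting maximality. Hence $w = v_j$ for some $j \geq 2$. The second step is the key parity observation: along $P$ the edge $v_{j-1}v_j$ is red precisely when $j$ is odd, so if $j$ is odd then $v_0 v_1 \dots v_j v_0$ is an alternating cycle (with edges coloured $R,B,R,B,\dots,R,B$), a contradiction. Thus every blue neighbour of $v_0$ on $P$ has even index.

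The third step is the rotation. For each even $j$ with $v_0v_j$ blue, the sequence $v_{j-1}, v_{j-2}, \dots, v_1, v_0, v_j, v_{j+1}, \dots, v_k$ is again an alternating path of length $k$ (the color transitions $R,B,\dots,R$ along $v_{j-1}\dots v_0$ followed by blue $v_0v_j$ followed by red $v_jv_{j+1}$ are consistent with $j$ being even), now with first endpoint $v_{j-1}$. Iterating the same analysis at each new endpoint produces a set $S$ of vertices each serving as the first endpoint of a longest alternating path ending at $v_k$; at every stage either an alternating cycle appears immediately (if some same-analysis blue edge has odd index), or $S$ grows.

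The main obstacle, and the place where 2-connectivity must enter, is closing the argument: forcing the rotation process to terminate in an alternating cycle rather than circulating forever among parity-compatible endpoints. Here I would invoke Menger's theorem, which by 2-connectivity yields two internally vertex-disjoint paths between any pair of vertices of $F$; applied to a pair of rotated endpoints, such a pair of internally disjoint paths combines with the alternating spine of $P$ to force an alternating closure once the parity constraints at both endpoints have been exhausted. The delicate part is the bookkeeping, tracking colors and parities at the two endpoints through each rotation and verifying that the growing set $S$ cannot be separated from $V(P)$ by a single vertex cut, which is precisely what converts the rotation structure into the sought alternating cycle.
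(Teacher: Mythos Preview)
The paper does not prove this statement at all: Theorem~\ref{thm:alternating_cycle} is quoted from Grossman and H\"aggkvist \cite{GH} and used as a black box, so there is no in-paper argument to compare against.

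Turning to your proposal on its own merits: the first three steps are fine. Taking a longest alternating path, observing that the off-colour neighbours of an endpoint must lie on the path, the parity check that an odd-index blue neighbour immediately closes an alternating cycle, and the rotation producing a new longest alternating path with endpoint $v_{j-1}$ --- all of this is correct and is indeed the natural opening move.

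The gap is your last paragraph, which is not a proof but an acknowledgement that you do not yet know how to finish. Invoking Menger to obtain two internally disjoint paths between a pair of rotated endpoints does not help as stated, because those Menger paths carry no colour information whatsoever; there is no mechanism offered for why splicing them into ``the alternating spine of $P$'' yields an \emph{alternating} cycle rather than an arbitrary one. Likewise, the claim that the rotation set $S$ ``cannot be separated from $V(P)$ by a single vertex cut'' is asserted, not argued, and even if true it is unclear how it forces an alternating closure. You have correctly located where $2$-connectivity must enter, but you have not actually used it.

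For orientation: the argument in \cite{GH} does start from a maximal alternating path and analyses the colour pattern of edges incident to its endpoints, but the way $2$-connectivity is exploited is structural rather than via Menger. Roughly, one shows that if no alternating cycle exists then there is a cut vertex $v$ such that all edges from $v$ into one of the two sides of the cut have the same colour; this directly contradicts the hypothesis that $F$ is $2$-connected with $d^R(v),d^B(v)\geq 1$ everywhere. Filling in your sketch would require either rediscovering this block/cut-vertex structure or supplying a genuinely different closure argument, neither of which the final paragraph does.
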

	\noindent
	The following is an easy corollary of Theorem \ref{thm:alternating_cycle}. 
	\begin{corollary}\label{cor:alternating_cycle}
			Let $F$ be a $2$-edge-coloured graph on $n \geq 2$ vertices in which $d^R(v),d^B(v) \geq \log n$ for every $v \in V(F)$. Then $F$ contains an alternating cycle. 
	\end{corollary}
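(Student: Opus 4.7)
The plan is to prove Corollary~\ref{cor:alternating_cycle} by strong induction on $n$, peeling off non-$2$-connected instances until the hypotheses of Theorem~\ref{thm:alternating_cycle} apply directly. The easy case is when $F$ is $2$-connected: since $n \geq 2$ implies $\log n \geq 1$, the hypothesis gives $d^R(v), d^B(v) \geq 1$ for every $v$, so Theorem~\ref{thm:alternating_cycle} immediately yields an alternating cycle.

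If $F$ is disconnected, pick a connected component $C$. Every $v \in C$ has all its $F$-neighbours in $C$, so the red and blue degrees inside $F[C]$ coincide with those in $F$; in particular $d^R_{F[C]}(v), d^B_{F[C]}(v) \geq \log n \geq \log |C|$, and $|C| \geq 2\log n + 1 \geq 2$. Since $|C| < n$, the inductive hypothesis supplies an alternating cycle in $F[C] \subseteq F$.

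The main case is when $F$ is connected but has a cut-vertex $c$. Let $C_1, \dots, C_k$ (with $k \geq 2$) be the connected components of $F - c$, and take $C_1$ to be the smallest one, so $|C_1| \leq (n-1)/2 < n/2$. Each $v \in C_1$ has all its $F$-neighbours inside $C_1 \cup \{c\}$, so passing from $F$ to $F[C_1]$ deletes at most a single edge at $v$ (the potential edge $vc$), and therefore costs at most one in the red and at most one in the blue degree. Hence $d^R_{F[C_1]}(v), d^B_{F[C_1]}(v) \geq \log n - 1$; moreover $|C_1| < n/2$ gives $\log|C_1| < \log n - 1$, so the required condition $\geq \log|C_1|$ still holds. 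Also $|C_1| \geq 2\log n \geq 2$ (since $v$'s $2\log n$ neighbours in $F$ all lie in $C_1 \cup\{c\}$), so the inductive hypothesis applies to $F[C_1]$ and produces the desired cycle.

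The only subtlety is the cut-vertex step: removing $c$ can drop each colour-degree of $v$ by one, so we must ensure that $\log n$ also drops by at least one. Choosing $C_1$ to be the smallest of at least two components forces $|C_1| < n/2$, which does exactly that. This is the one place where the threshold $\log n$ (rather than a constant) is genuinely needed.
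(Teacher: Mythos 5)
Your proof is correct and is essentially the paper's argument: induction on $n$, invoking the Grossman--H\"aggkvist theorem in the $2$-connected case, and otherwise passing to a side of a separation of order at most $1$ with at most $n/2$ vertices, where each colour degree drops by at most $1$ while the threshold $\log n$ drops to $\log(n/2)=\log n-1$. The only cosmetic difference is that you treat the disconnected and cut-vertex cases separately, whereas the paper handles both at once via a partition $X\cup Y\cup S$ with $|S|\leq 1$.
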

	\begin{proof}
	The proof is by induction on $n$, where the base case $n=2$ is trivial. Suppose then that $n \geq 3$.  
%	Let $F$ be an $n$-vertex $2$-edge-colored graph as in the corollary. 
	By Theorem \ref{thm:alternating_cycle}, if $F$ has no alternating cycle then it is not $2$-connected, 
%	namely there are disjoint sets $C_1,\dots,C_t \subseteq V(F)$, $t \geq 2$, such that $|C_1| + \dots + |C_t| \geq n-1$ and there are no edges between $C_i$ and $C_j$ for $i \neq j$. There is $1 \leq i \leq t$ for which $|C_i| \leq \frac{n}{2}$, say $i = 1$. Then for every $v \in C_1$ we have $d^R_{C_1}(v),d^B_{C_1}(v) \geq \log_2 n - 1 = \log_2(n/2) \geq \log_2(|C_1|)$. Then by the induction hypothesis, $F[C_1]$ contains an alternating cycle. 
	meaning that there is a partition $V(F) = X \cup Y \cup S$ such that $|S| \leq 1$, $X,Y \neq \emptyset$, and there are no edges in $F$ between $X$ and $Y$. Suppose without loss of generality that $|X| \leq n/2$ (the case $|Y| \leq n/2$ is symmetric). As there are no edges between $X$ and $Y$ and as $|S| \leq 1$, we must have 
	$d^R_{F[X]}(v),d^B_{F[X]}(v) \geq \log n - 1 = \log(n/2) \geq \log(|X|)$ for every $v \in X$. In particular we have $|X| \geq 2$, since $\log n > 1$. By the induction hypothesis, $F[X]$ contains an alternating cycle, as required.      
	\end{proof}

	An alternative way of deriving Corollary \ref{cor:alternating_cycle} is to apply a directed analogue thereof due to Gutin, Sudakov and Yeo \cite{GSY}, which states that every $n$-vertex $2$-edge-coloured digraph with minimum out- and in-degree at least $C\log n$ in each of the colours contains an alternating directed cycle. The reduction from the undirected case to the directed one proceeds by replacing each undirected edge $\{u,v\}$ with both directed edges $(u,v),(v,u)$, coloured with the same colour as $\{u,v\}$.   
%	The following theorem follows easily from a result of Gutin, Sudakov and Yeo \cite{GSY}.
%	\begin{corollary}\label{cor:alternating_cycle}
%		Let $F$ be an $n$-vertex $2$-edge-colored graph in which $d^R(v),d^B(v) \geq \log_2 n$ for every $v \in V(F)$. Then $F$ contains an alternating cycle. 
%	\end{corollary}
	
%	We will the following weaker form of the main result of Grossman and H\"{a}ggkvist \cite{GH}. 
%	\begin{theorem}\label{thm:alternating_cycle}
%		Let $F$ be a $2$-connected $2$-edge-colored graph in which $d^R(v),d^B(v) \geq 1$ for every $v \in V(F)$. Then $F$ contains an alternating cycle. 
%	\end{theorem}
%	\begin{corollary}\label{cor:alternating_cycle}
%		Let $F$ be an $n$-vertex $2$-edge-colored graph in which $d^R(v),d^B(v) \geq \log_2 n$ for every $v \in V(F)$. Then $F$ contains an alternating cycle. 
%	\end{corollary}

%	\begin{theorem}\label{thm:main}
%		Let $\varepsilon \in (0,1)$, let $n \geq \dots$, let $G$ be a graph on $[n]$ and let $(\delta_1,\dots,\delta_n)$ be a realizable sequence such that $\sum_{i \in V(G)}{|\delta_i - d_G(i)|} \leq \frac{1}{2} \varepsilon n^2$. Then $G$ is $\varepsilon$-close to a graph $G'$ satisfying $d_{G'}(i) = \delta_i$ for every $i \in V(G)$. 
%	\end{theorem}
	
	The last tool we need in the proof of Theorem \ref{thm:distance} is the following lemma.

	\begin{lemma}\label{lem:balanced}
		Let $\delta > 0$ and $n \geq \delta^{-2}$, let $F$ be a $2$-edge-coloured graph on $[n]$ which contains no alternating cycle, and suppose that
		$\sum_{i \in V(F)}{|d^R(i) - d^B(i)|} \leq \delta n^2$. Then $|E(F)| \leq O(\delta^{1/2}n^2)$. 
	\end{lemma}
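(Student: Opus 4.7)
The natural approach is iterative vertex removal guided by Corollary~\ref{cor:alternating_cycle}. Since $F$ has no alternating cycle, every subgraph $H \subseteq F$ with $|V(H)| \geq 2$ contains, by Corollary~\ref{cor:alternating_cycle}, a vertex $v$ with $\min(d^R_H(v), d^B_H(v)) < \log|V(H)|$. Iterating, we remove vertices $v_1, v_2, \ldots, v_k$ in sequence (where $H_i := F - \{v_1, \ldots, v_{i-1}\}$ and $k \geq n-1$) until the graph is reduced to at most one vertex. Writing $r_i \leq b_i$ for the red and blue degrees of $v_i$ in $H_i$, we have $r_i < \log n$ and $|E(F)| = \sum_i (r_i + b_i)$. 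Hence $\sum_i r_i \leq n\log n$, and since $n \geq \delta^{-2}$ gives $\sqrt{\delta}\,n \geq \delta^{-3/2} \gg \log n$, we obtain $n\log n = O(\sqrt{\delta}\,n^2)$. Everything thus reduces to bounding $\sum_i b_i$.

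For indices with $b_i \leq \sqrt{\delta}\,n$ the contribution is trivially at most $\sqrt{\delta}\,n^2$, so I focus on indices with $b_i > \sqrt{\delta}\,n$. For these, $r_i < \log n \ll b_i$, so $b_i \leq 2(b_i - r_i) = 2\,|d^R_{H_i}(v_i) - d^B_{H_i}(v_i)|$. Thus the task reduces to bounding $\sum_i |d^R_{H_i}(v_i) - d^B_{H_i}(v_i)|$. Writing $d^R_{H_i}(v_i) = d^R_F(v_i) - y_i$ and $d^B_{H_i}(v_i) = d^B_F(v_i) - x_i$, where $x_i, y_i$ count the blue and red edges from $v_i$ to previously removed vertices, the triangle inequality gives $\sum_i |d^R_{H_i}(v_i) - d^B_{H_i}(v_i)| \leq D + \sum_i(x_i + y_i)$, where $D = \sum_v |d^R_F - d^B_F| \leq \delta n^2 \leq \sqrt{\delta}\,n^2$.

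The difficulty is that $\sum_i(x_i+y_i)$ counts edges among removed vertices, and in the worst case this is as large as $|E(F)|$ itself, making the naive bound circular. The main obstacle is controlling this error, and I expect the correct fix to be a two-phase argument. First, remove all vertices of total degree at most some threshold $T$ of order $\sqrt{\delta}\,n$; this costs $O(\sqrt{\delta}\,n^2)$ edges and inflates the imbalance by at most a constant multiple of $\sqrt{\delta}\,n^2$ (since each removal perturbs $\Phi(H) := \sum_v |d^R_H - d^B_H|$ by $O(d_H(v))$). Second, on the remaining subgraph $H$, the minimum degree is at least $T$, and the imbalance hypothesis still holds with parameters of order $\sqrt{\delta}\,n^2$; discarding the $O(\sqrt{\delta}\,n)$ vertices whose imbalance-in-$H$ exceeds a suitable threshold $\alpha \asymp \sqrt{\delta}\,n$ leaves a large subset on which every vertex has min colour-degree $\geq (T-\alpha)/2 - O(\sqrt{\delta}\,n) \geq \log n$ (where $n \geq \delta^{-2}$ is used to absorb the $\log n$ slack). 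Corollary~\ref{cor:alternating_cycle} then produces an alternating cycle in this subset, contradicting the assumption on $F$ unless the original graph had already at most $O(\sqrt{\delta}\,n^2)$ edges. The main technical calculation is choosing $T$ and $\alpha$ so that the edges removed in the cleanup, the imbalance inflation, and the balancing discard each come out to $O(\sqrt{\delta}\,n^2)$.
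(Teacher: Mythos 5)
Your first half correctly identifies the circularity in the naive removal argument, but the two-phase ``fix'' you sketch does not close it, and the gap is quantitative and fatal. After phase 1 (iteratively deleting vertices of degree at most $T\asymp\sqrt{\delta}\,n$), the surviving graph $H$ has imbalance $\Phi(H)=O(\sqrt{\delta}\,n^2)$, as you say. But then the set $S$ of vertices of $H$ whose imbalance in $H$ exceeds $\alpha\asymp\sqrt{\delta}\,n$ can only be bounded by Markov as $|S|\leq \Phi(H)/\alpha = O(\sqrt{\delta}\,n^2)/(\sqrt{\delta}\,n)=O(n)$, not the $O(\sqrt{\delta}\,n)$ you claim --- you are off by a factor of $1/\sqrt{\delta}$. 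With $|S|$ possibly a constant fraction of $n$, discarding $S$ can wipe out the minimum colour-degree of the remaining vertices (each loses up to $|S|$ neighbours), and the edges incident to $S$ (up to $|S|\cdot n$) are not accounted for either, so Corollary~\ref{cor:alternating_cycle} cannot be applied and no contradiction is reached. Moreover, no choice of thresholds repairs this: you need $\alpha< T\lesssim\sqrt{\delta}\,n$ (so that phase 1 is affordable and $(T-\alpha)/2\geq\log n$ has a chance) together with $\Phi(H)/\alpha\lesssim T$ (so that the discard does not destroy degrees), and these force $\alpha\gtrsim n$, which is incompatible with $\alpha<T\lesssim\sqrt{\delta}\,n$. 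In other words, the circularity you diagnosed has only been relocated into the discard step, not removed; iterating the two phases just re-inflates the imbalance and re-creates the same problem.

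The missing idea is to abandon local, vertex-by-vertex accounting in favour of a global argument. The paper starts from the same consequence of Corollary~\ref{cor:alternating_cycle} that you use (an ordering of the vertices in which every vertex sends fewer than $\log n$ forward edges in one of the colours), but then partitions $[n]$ into $k=\delta^{-1/2}$ consecutive intervals $X_1,\dots,X_k$ and proves, for each interval, an inequality of the form ``forward cross-edges from $X_j$ $\leq$ backward cross-edges into $X_j$ $+$ the local imbalance $\Delta_j$ $+$ $O(n\log n)$''. Summing these inequalities over initial segments telescopes: the backward terms cancel against forward terms already counted, leaving $\sum_{j\le r}e(X_j,X_{r+1})=O(\delta n^2)$ for each $r$, hence $O(\delta^{1/2}n^2)$ cross edges in total, while edges inside the intervals are trivially at most $k\cdot(n/k)^2=\delta^{1/2}n^2$. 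This global discrepancy/telescoping step is exactly what replaces the uncontrollable error term $\sum_i(x_i+y_i)$ in your scheme, and it is absent from your proposal.
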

%	??? Use $x$ to denote a vertex of $F$, so that $i$ is free as an index. ???
	\begin{proof}
		By (the contrapositive of) Corollary \ref{cor:alternating_cycle}, there exists {\em no} non-empty subset $U \subseteq V(F)$ such that $d^R_{F[U]}(v),d^B_{F[U]}(v) \geq \log n \geq \log(|U|)$ 
		for every $v \in U$. This implies that there is an ordering of the vertices of $F$ such that each vertex has either less than $\log n$ red edges or less than $\log n$ blue edges to the vertices succeeding it in the ordering. By possibly renaming vertices, we may assume that this ordering is $1,\dots,n$; namely, for every $1 \leq i \leq n$ there is a colour $c_i \in \{ \text{red,blue} \}$, such that $i$ has less than $\log n$ neighbours in colour $c_i$ in the set $\{i+1,\dots,n\}$. For an edge $e = \{i,j\} \in E(F)$ with $i < j$, we will call $i$ the {\em left end} of $e$ and $j$ the {\em right end} of $e$. 
		% ; formally, $|N^{c_i}_F(i) \cap {\{i+1,\dots,n\}}| < \log n$.
		Let $E^*$ be the set of edges of the form $\{i,j\}$, $i < j$, whose colour is $c_i$. Note that $|E^*| \leq n\log n$.  
		
		Set $k := \delta^{-1/2}$, and partition $[n]$ into $k$ consecutive intervals $X_1,\dots,X_k$ of length $\frac{n}{k} = \delta^{1/2}n$ each. Namely, for each $1 \leq j \leq k$, define 
		$X_j := \{ i \in [n] : (j-1) \cdot \frac{n}{k} + 1 \leq i \leq j \cdot \frac{n}{k}\}$. 
		For each $1 \leq j \leq k$, put 
		$
		\Delta_j := \sum_{i \in X_j}{|d^R(i) - d^B(i)|}$, noting that $\Delta_1 + \dots + \Delta_k \leq \delta n^2$ by assumption. Finally, define $R_j := \{i \in X_j : c_i = \text{red}\}$ and 
		$B_j := \{i \in X_j : c_i = \text{blue}\}$ (for $1 \leq j \leq k$).
		
		Fixing any $1 \leq j \leq k$, we claim that
		\begin{equation}\label{eq:forward_edges}
		\sum_{i \in R_j}{d^B(i)} + \sum_{i \in B_j}{d^R(i)} \geq 
		\sum_{t = j+1}^{k}{e(X_j,X_{t})} + e(X_j) - n\log n.
		\end{equation}
		%		To see that \eqref{eq:forward_edges} holds, recall that each $i \in R_j$ sends at most $\log n$ red edges to 
		%		$X_{j+1} \cup \dots \cup X_{\ell}$, and each $i \in B_j$ sends at most $\log n$ blue edges to $X_{j+1} \cup \dots \cup X_{\ell}$. Thus, apart from at most $|X_j| \log n \leq n\log n$ edges, all edges between $X_j$ and $X_{j+1} \cup \dots \cup X_{\ell}$ are either blue and incident to $R_j$ or red and incident to $B_j$, and are hence counted by the left-hand side of \eqref{eq:forward_edges}.
		To see that \eqref{eq:forward_edges} holds, recall that each $i \in R_j$ is the left end of at most $\log n$ red edges, and each $i \in B_j$ is the left end of at most $\log n$ blue edges. Thus, apart from the $|E^*| \leq n\log n$ edges in $E^*$, all edges whose left end is in $X_j$  --- i.e., all edges inside $X_j$ or between $X_j$ and $X_{j+1} \cup \dots \cup X_{k}$ --- are either blue with a left end in $R_j$ or red with a left end in $B_j$, and are hence counted by the left-hand side of \eqref{eq:forward_edges}. 
		
		Next, we use a similar consideration to argue that
		\begin{equation}\label{eq:backward_edges} 
		\sum_{i \in R_j}{d^R(i)} + \sum_{i \in B_j}{d^B(i)} \leq 
		\sum_{s = 1}^{j-1}{e(X_{s},X_j)} + e(X_j) + 2n\log n.
		\end{equation}
		To see that \eqref{eq:backward_edges} holds, 
		we again use the fact that each $i \in R_j$ is the left end of at most $\log n$ red edges and each $i \in B_j$ is the left end of at most $\log n$ blue edges, which means that apart from the $|E^*| \leq n \log n$ edges in $E^*$ (which are counted at most twice), every edge counted by the left-hand side of \eqref{eq:backward_edges} has its right end in $X_j$, and hence its left end in $X_1 \cup \dots \cup X_j$. Furthermore, if $e = \{x,y\}$ is contained in $X_j$ and $e \notin E^*$, then $e$ is counted at most once by the left-hand side of \eqref{eq:backward_edges}; indeed, assuming $x < y$, we know that either $x \in R_j$ and $e$ is blue, or $x \in B_j$ and $e$ is red, which means that the sums on the left-hand side of \eqref{eq:backward_edges} may only count $e$ when $i = y$, but not when $i = x$. This establishes \eqref{eq:backward_edges}. 
		
		Finally, our definition of $\Delta_j$ implies that 
		\begin{equation}\label{eq:discrepancy}
		\sum_{i \in R_j}{d^B(i)} + \sum_{i \in B_j}{d^R(i)} \leq 
		\sum_{i \in R_j}{d^R(i)} + \sum_{i \in B_j}{d^B(i)} + \Delta_j.
		\end{equation}
		%		Furthermore, by using again the fact that there are at most 
		%		$n \log n$ red (resp. blue) edges between $R_j$ (resp. $B_j$) and
		%		$X_{j+1} \cup \dots \cup X_{\ell}$, we get that
		By combining \eqref{eq:forward_edges}, \eqref{eq:backward_edges} and \eqref{eq:discrepancy}, we obtain
		\begin{equation}\label{eq:main}
		\sum_{t = j+1}^{k}{e(X_j,X_{t})} \leq 
		\sum_{s = 1}^{j-1}{e(X_{s},X_j)} + \Delta_j + 3n\log n.
		\end{equation}
		Now fix any $1 \leq r \leq k-1$, and sum the inequality \eqref{eq:main} over $j = 1,\dots,r$ to obtain:
		$$
		\sum_{j = 1}^{r}\sum_{t = j+1}^{k} e(X_j,X_{t}) \leq 
		\sum_{1 \leq s < j \leq r}e(X_{s},X_j) + \sum_{j = 1}^{r}{\Delta_j} + 3rn\log n.
		$$
		Observe that for every $1 \leq s < j \leq r$, the term $e(X_{s},X_j)$ appears in the left-hand side above, and thus may be canceled out. On the other hand, the left-hand side has the terms $e(X_j,X_{r+1})$, $1 \leq j \leq r$, which do not appear on the right-hand side. Note also that $\Delta_1 + \dots + \Delta_r \leq \Delta_1 + \dots + \Delta_k \leq \delta n^2$ and $3rn\log n \leq 3kn\log n \leq O(\delta n^2)$, where the last inequality holds due to our assumption that $n \geq \delta^{-2}$ and as $k = \delta^{-1/2}$. Combining all the above, after the aforementioned cancellations we obtain
		$
		\sum_{j = 1}^{r}{e(X_j,X_{r+1})} = O(\delta n^2).
		$
		Summing this over all $1 \leq r \leq k-1$, we get
		$$
		\sum_{1 \leq s < t \leq k}{e(X_s,X_t)} = O(k \delta n^2) = O(\delta^{1/2} n^2).
		$$
		Finally, note that $\sum_{j = 1}^{k}{e(X_j)} \leq k \cdot (n/k)^2 = n^2/k = \delta^{1/2}n^2$. Altogether, we have
		$$
		e(F) = \sum_{j = 1}^{k}{e(X_j)} + \sum_{1 \leq s < t \leq k}{e(X_s,X_t)} = O(\delta^{1/2}n^2),
		$$
		as required.  
	\end{proof}

	\noindent
	We are now ready to prove Theorem \ref{thm:distance}.

	\begin{proof}[Proof of Theorem \ref{thm:distance}]
		Since $(d_1,\dots,d_n)$ is realizable, there is a graph on $[n]$ in which the degree of vertex $i$ is $d_i$ (for every $1 \leq i \leq n$). 
		Among all such graphs, fix one, $G'$, which minimizes 
		$|E(G') \triangle E(G)|$. Our goal is to show that 
		$|E(G') \triangle E(G)| = O(\delta^{1/2}n^2)$. 
		Let $F$ be the graph on $[n]$ with edge-set $E(F) = E(G') \triangle E(G)$. We $2$-colour the edges of $F$ by colouring $E(G') \setminus E(G)$ red and $E(G) \setminus E(G')$ blue. The key observation is that $F$ does not have an alternating cycle; indeed, if there were an alternating cycle in $F$, then by removing from $G'$ all red edges of this cycle, and adding to $G'$ all blue edges of this cycle, we would get a graph $G''$ with the same degree sequence as $G'$ and with 
		$|E(G'') \triangle E(G)| < |E(G') \triangle E(G)|$, in contradiction to the minimality of $G'$. Next, observe that for every $i \in [n]$, we have 
		$d_i = d_{G'}(i) = d_G(i) + d^R_F(i) - d^B_F(i)$. \nolinebreak Hence,
		$$
		\sum_{i \in V(F)}{|d^R_F(i) - d^B_F(i)|} = 
		\sum_{i \in V(G)}{|d_G(i) - d_i|} \leq \delta n^2.
		$$
		By Lemma \ref{lem:balanced} we have $|E(G') \triangle E(G)| = e(F) = O(\delta^{1/2}n^2)$, as required. 
	\end{proof}

	\section{Testing Degree Sequence Properties}\label{sec:test}
	In this section we show that every degree-sequence property admits a tester with query complexity independent of $n$, thus proving Theorem \ref{thm:main}.
	Our tester works by approximating the degree sequence of the given input graph. 
	Let us introduce the relevant definitions. 
%	For an $n$-vertex graph $G$ and $\varepsilon > 0$, the {\em $\varepsilon$-degree-statistic} of $G$ is the sequence $(a_i : 1 \leq i \leq \frac{1}{\varepsilon})$, where $a_i := \frac{1}{n} \cdot \#\{x \in V(G) : (i-1)\varepsilon n \leq d(x) < i \varepsilon n\}$. In other words, the $\varepsilon$-degree-statistic tells us the fraction of the vertices of $G$ whose degree falls into a given interval with endpoints which are integer multiples of $\varepsilon n$. We will use the following standard argument concerning approximation of the degree sequence using samples.  
	A {\em degree statistic} is given by reals $\alpha_1,\dots,\alpha_k \in [0,1]$ with $\alpha_1 + \dots + \alpha_k = 1$. For a degree statistic $\alpha = (\alpha_1,\dots,\alpha_k)$ and an integer $n \geq 1$, we define $d = d(n,\alpha)$ as the sequence $d = (d_1,\dots,d_n) \in \mathbb{N}^n$ which has, for every $1 \leq \ell \leq k$, precisely $\alpha_{\ell} n$ coordinates equal to $\frac{2\ell-1}{2k}n$ (we assume, for simplicity of presentation, that $\frac{n}{2k}$ and $\alpha_1 n,\dots,\alpha_k n$ are integers). For example, the sequence $d = d(n,\alpha)$ corresponding to $\alpha = (0.2,0.8)$ has $0.2n$ terms equal to $\frac{n}{4}$ and $0.8n$ terms equal to $\frac{3n}{4}$. Let $\delta > 0$ and let $G$ be a graph on $[n]$. A degree statistic $\alpha = (\alpha_1,\dots,\alpha_k)$ is said to {\em $\delta$-approximate} $G$ if there is a permutation $\pi : [n] \rightarrow [n]$ such that $\sum_{i=1}^{n}{|d_G(\pi(i)) - d_i|} \leq \delta n^2$, where $(d_1,\dots,d_n) = d(n,\alpha)$. In other words, a degree statistic $\delta$-approximates $G$ if,
 	after possibly relabeling the vertices of $G$, 
 	the degree sequence of $G$ and the degree sequence corresponding to the statistic differ by at most $\delta$ in (normalized) $\ell_1$-distance. 
	
%	\begin{lemma}\label{lem:degree statistic}
%		There is an algorithm which, given $\delta > 0$ and an input graph $G$, makes edge-queries to $G$ and outputs a degree statistic $(\alpha_1,\dots,\alpha_k)$ with $k = \lceil 1/\delta \rceil$ which $\delta$-approximates $G$ with probability at least $\frac{2}{3}$. The query complexity and run-time of the algorithm is $\tilde{O}(\delta^{-8})$.
%	\end{lemma}
	\begin{lemma}\label{lem:degree statistic}
		There is an algorithm which, given $\delta > 0$ and an input graph $G$, outputs a degree statistic $(\alpha_1,\dots,\alpha_k)$ with $k = \lceil 1/\delta \rceil$ which $\delta$-approximates $G$ with probability at least $\frac{2}{3}$. The query complexity and run-time of the algorithm is $\tilde{O}(\delta^{-8})$.
	\end{lemma}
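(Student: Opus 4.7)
The algorithm is the natural sample-and-bucket one. Set $k := \lceil 1/\delta \rceil$ and choose $s, t = \tilde{\Theta}(\delta^{-4})$. Sample $v_1,\dots,v_s$ uniformly and independently from $V(G)$, and for each $i \in [s]$ sample $u^{(i)}_1,\dots,u^{(i)}_t$ uniformly and independently from $V(G)$, querying whether $\{v_i, u^{(i)}_j\} \in E(G)$; let $\hat p_i$ be the observed edge-fraction. Define $\hat b_i \in [k]$ by $\hat p_i \in [\tfrac{\hat b_i - 1}{k}, \tfrac{\hat b_i}{k})$, and output $\hat\alpha_\ell := |\{i \in [s] : \hat b_i = \ell\}|/s$. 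The query- and time-complexity is $O(st) = \tilde{O}(\delta^{-8})$.

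For correctness, I would rephrase the required $\ell_1$-approximation as a Wasserstein-$1$ bound. Writing $\mu_G$ for the uniform measure on the multiset $\{d_G(v) : v \in V(G)\}$ and $\nu_\alpha$ for the measure placing mass $\alpha_\ell$ at the bucket centre $\tfrac{2\ell-1}{2k}n$, the statement ``$\hat\alpha$ $\delta$-approximates $G$'' is equivalent to $W_1(\mu_G,\nu_{\hat\alpha}) \le \delta n$, since in one dimension $\min_\pi \sum_i |d_G(\pi(i)) - d_i| = n \cdot W_1(\mu_G, \nu_{\hat\alpha})$.

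Let $\tilde\alpha_\ell := \Pr[\hat b(v) = \ell]$ for $v$ uniform in $V(G)$ and a fresh query set (so each $\hat b_i$ has law $\tilde\alpha$), and split by the triangle inequality
\[
W_1(\mu_G,\nu_{\hat\alpha}) \le W_1(\mu_G,\nu_{\tilde\alpha}) + W_1(\nu_{\tilde\alpha},\nu_{\hat\alpha}).
\]
The first term I would bound by the deterministic coupling that transports each $v$ (with mass $1/n$) to the centre of its own estimated bucket $\hat b(v)$: the per-vertex cost is at most $|d_G(v) - n\hat p_v| + n/(2k)$ (triangle inequality plus bucket granularity), and the variance bound $\mathbb{E}[(\hat p_v - p_v)^2] \le 1/(4t)$ combined with Jensen gives $W_1(\mu_G,\nu_{\tilde\alpha}) \le \tfrac{n}{2\sqrt t} + \tfrac{n}{2k}$. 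The second term is at most $\tfrac{n}{2}\|\tilde\alpha - \hat\alpha\|_1$ by a standard transport coupling on the $k$ bucket centres, and since $\hat b_1,\dots,\hat b_s$ are i.i.d.\ with law $\tilde\alpha$, Hoeffding plus a union bound over buckets gives $\|\tilde\alpha - \hat\alpha\|_1 \le \delta/2$ with probability $\ge \tfrac{2}{3}$ provided $s = \tilde{\Omega}(k^2/\delta^2) = \tilde{\Omega}(\delta^{-4})$. With $t = \tilde{\Theta}(\delta^{-4})$ we have $n/(2\sqrt t) \le \delta n/4$, and $n/(2k) \le \delta n/2$ from $k \ge 1/\delta$; summing the three contributions yields $W_1(\mu_G,\nu_{\hat\alpha}) \le \delta n$.

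The subtlety I would flag is to \emph{not} route the argument through the idealized bucketing $\alpha^*_\ell := |\{v : d_G(v)/n \in [\tfrac{\ell-1}{k}, \tfrac{\ell}{k})\}|/n$. If $G$ has many vertices with degree essentially on a bucket boundary, $\|\alpha^* - \tilde\alpha\|_1$ can be of order $1$ even though $W_1(\nu_{\alpha^*},\nu_{\tilde\alpha})$ is tiny, since estimation error merely shifts mass into an adjacent bucket---Wasserstein correctly charges this as cheap, but $\ell_1$ does not. Working with $W_1$ (equivalently, with the matching cost itself) throughout sidesteps this trap and gives the bound unconditionally.
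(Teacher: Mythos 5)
Your proof is correct, but it takes a genuinely different route from the paper. The paper conditions on two high-probability events: (i) every one of the $s$ sampled vertices has its degree estimated to within $\gamma \approx \delta^2$ (this is what forces $t = \tilde\Theta(\delta^{-4})$ and a union bound over the samples), and (ii) the empirical degree distribution concentrates at $2k-2$ one-sided thresholds padded by $\gamma$ (the sets $X_\ell, Y_\ell$); it then builds an explicit permutation by sorting the vertices by degree, matching blocks $A_\ell$ of consecutive vertices to the coordinates equal to $\tfrac{2\ell-1}{2k}n$, and charging a leftover set $Z$ of size $O(k\gamma n)$ at full cost $n$ per vertex. The $\gamma$-padded one-sided thresholds combined with the sorted matching are exactly how the paper dodges the bucket-boundary trap you flag at the end. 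You instead recast $\delta$-approximation as a Wasserstein-$1$ bound (legitimate, since the optimal permutation cost equals $n\cdot W_1$ in one dimension, under the paper's standing integrality assumption on $\alpha_\ell n$), insert the intermediate law $\tilde\alpha$ of a single noisy bucket, bound $W_1(\mu_G,\nu_{\tilde\alpha})$ deterministically via an averaged coupling using only the variance of the degree estimator (your ``deterministic coupling'' is really the plan that splits the mass of $v$ according to the law of $\hat b(v)$, but your Jensen computation makes clear that is what you mean), and bound $W_1(\nu_{\tilde\alpha},\nu_{\hat\alpha})$ by Hoeffding on the i.i.d.\ bucket labels. What each approach buys: the paper's argument is elementary and produces a concrete vertex-to-coordinate matching in the same style as the rest of Section 3, while yours is more modular and, because the first term only needs the estimator's standard deviation $n/(2\sqrt t)$ rather than uniform accuracy over all $s$ samples, it would in fact work with $t = \Theta(\delta^{-2})$, improving the query complexity to $\tilde O(\delta^{-6})$ --- an improvement you left on the table by setting $t = \tilde\Theta(\delta^{-4})$, though this of course still meets the lemma's stated $\tilde O(\delta^{-8})$ bound.
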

	\begin{proof}
		We use a standard ``approximation-by-sampling" argument. 
		Put $n := |V(G)|$.
		We set $k := \lceil 1/\delta \rceil$ and $\gamma := \frac{1}{2k(2k+1)}$. 
		Our algorithm works as follows: sample $s := \frac{\log(12k)}{2\gamma^2} = \tilde{O}(\delta^{-4})$ vertices $v_1,\dots,v_s \in V(G)$, and for each $1 \leq i \leq s$, sample $t := \frac{\log(6s)}{2\gamma^2} = \tilde{O}(\delta^{-4})$ additional vertices $u_{i,j} \in V(G)$, $1 \leq j \leq t$, where all samples are made uniformly and independently. 
		Now, query all pairs $\{v_i,u_{i,j}\}$ ($1 \leq i \leq s$ and $1 \leq j \leq t$). 
		For each $1 \leq i \leq s$, let $\bar{d}_i := |N_G(v_i) \cap \{u_{i,1},\dots,u_{i,t}\}|$. 
%		For each $1 \leq \ell \leq k$, set 
%		$\alpha_{\ell} := 
%		\frac{1}{s} \cdot 
%		\#\{1 \leq i \leq s \; : \; \frac{\ell-1}{k} \cdot t \leq \bar{d}_i < \frac{\ell}{k} \cdot t \}$. 
%		??? $\alpha_k$ should also account for the case $\bar{d_i} = t$. ???
%		Set 
%		$\alpha_{\ell} := 
%		\frac{1}{s} \cdot 
%		\#\{1 \leq i \leq s \; : \; \frac{\ell-1}{k} \cdot t \leq \bar{d}_i < \frac{\ell}{k} \cdot t \}$ for $1 \leq \ell \leq k - 1$ and 
%		$\alpha_k :=
%		\frac{1}{s} \cdot 
%		\#\{1 \leq i \leq s \; : \; \frac{k-1}{k} \cdot t \leq \bar{d}_i \leq t \}$. 
%		Set 
%		$\alpha_1 :=
%		\frac{1}{s} \cdot 
%		\#\{1 \leq i \leq s \; : \; \bar{d}_i \leq \frac{1}{k} \cdot t \}$ and 
%		$\alpha_{\ell} := 
%		\frac{1}{s} \cdot 
%		\#\{1 \leq i \leq s \; : \; \frac{\ell-1}{k} \cdot t < \bar{d}_i \leq \frac{\ell}{k} \cdot t \}$ 
%		for $2 \leq \ell \leq k$. 
		Set 
		$\alpha_1 :=
		\frac{1}{s} \cdot 
		\#\{1 \leq i \leq s \; : \; \frac{\bar{d}_i}{t} \leq \frac{1}{k} \}$ and 
		$\alpha_{\ell} := 
		\frac{1}{s} \cdot 
		\#\{1 \leq i \leq s \; : \; \frac{\ell-1}{k} < \frac{\bar{d}_i}{t} \leq \frac{\ell}{k} \}$ 
		for $2 \leq \ell \leq k$. 
		The algorithm outputs $(\alpha_1,\dots,\alpha_k)$.  
		Note that the total number of queries, as well as the run-time, is 
		$O(st) = \tilde{O}(\delta^{-8})$, as required.  
		
		Let us prove the correctness of the above algorithm. 
		Observe that $\bar{d}_i$ is distributed as a binomial random variable with parameters $t$ and $d_G(v_i)/n$. By Hoeffding's inequality, for every $1 \leq i \leq s$ we \nolinebreak have
		\begin{equation}\label{eq:single_vertex_degree_concentration}
		\mathbb{P}\left[ \left|\frac{\bar{d}_i}{t} - \frac{d_G(v_i)}{n}\right| \geq \gamma \right] \leq 
	%		\mathbb{P}\big[ |\bar{d}_i/t - d_G(v_i)/n| \geq \gamma \big] \leq 
	%		\exp\left( {-2\gamma^2 t} \right) 
		e^{-2\gamma^2 t} 
		\leq 
		\frac{1}{6s}. 
		\end{equation}
%		By the union bound over $1 \leq i \leq s$, we see that with probability at least $5/6$, one has $d_G(v_i) = \left( \frac{\bar{d_i}}{t} \pm \gamma \right)n$ for every $1 \leq i \leq s$. Let us call this event $\mathcal{A}$.  
		Let $\mathcal{A}$ be the event that 
		$\left|\frac{\bar{d}_i}{t} - \frac{d_G(v_i)}{n}\right| \leq \gamma$ for every $1 \leq i \leq s$. By using \eqref{eq:single_vertex_degree_concentration} and taking the union bound over $1 \leq i \leq s$, we get that $\mathbb{P}[\mathcal{A}] \geq 5/6$.  
		
		Moving forward, let us introduce some additional notation. For a set $X \subseteq \{0,\dots,n-1\}$, we define
		$c(X) := \frac{1}{n} \cdot \#\{ v \in V(G) : d_G(v) \in X\}$ and 
		$\bar{c}(X) := \frac{1}{s} \cdot \#\{1 \leq i \leq s : d_G(v_i) \in X\}$. Observe that for any given set $X$, the random variable 
		$\#\{1 \leq i \leq s : d_G(v_i) \in X\} = s \cdot \bar{c}(X)$ has a binomial distribution with parameters $s$ and $c(X)$. Hence, by Hoeffding's inequality, we have
		\begin{equation}\label{eq:degree_statistic_concentration}
		\mathbb{P}\left[ \left|\bar{c}(X) - c(X)\right| \geq \gamma \right] \leq 
		%		\mathbb{P}\big[ |\bar{d}_i/t - d_G(v_i)/n| \geq \gamma \big] \leq 
		%		\exp\left( {-2\gamma^2 t} \right) 
		e^{-2\gamma^2 s} 
		\leq 
		\frac{1}{12k}. 
		\end{equation}
%		We will apply \eqref{eq:degree_statistic_concentration} to the following $2k-1$ sets: $X_{\ell} := \{d : 0 \leq d \leq (\frac{\ell}{k} + \gamma)n\}$ for $1 \leq \ell \leq k$, and 
%		$Y_{\ell} := \{d : 0 \leq d \leq (\frac{\ell}{k} - \gamma)n\}$ for $1 \leq \ell \leq k-1$. 
		We will apply \eqref{eq:degree_statistic_concentration} to the following $2k-2$ sets: $X_{\ell} := \{d : 0 \leq d \leq (\frac{\ell}{k} + \gamma)n\}$ for $1 \leq \ell \leq k-1$ and 
		$Y_{\ell} := \{d : 0 \leq d \leq (\frac{\ell-1}{k} - \gamma)n\}$ for $2 \leq \ell \leq k$. 
		Let $\mathcal{B}$ be the event that $\left|\bar{c}(X) - c(X)\right| \leq \gamma$ for every $X \in \{X_1,\dots,X_{k-1},Y_2,\dots,Y_{k}\}$. By using \eqref{eq:degree_statistic_concentration} and the union bound, we get that $\mathbb{P}[\mathcal{B}] \geq 5/6$. 
%		For $1 \leq \ell \leq k$, we define the vertex-sets
%		$X_{\ell} := \{ x \in V(G) : d_G(x) \geq  \left( \frac{\ell}{k} - \gamma \right)n\}$ and \linebreak
%		$Y_{\ell} := \{ x \in V(G) : d_G(x) \leq  \left( \frac{\ell}{k} + \gamma \right)n\}$. 

		Thus far we have shown that $\mathbb{P}[\mathcal{A} \cap \mathcal{B}] \geq 2/3$. Let us assume from now on that both $\mathcal{A}$ and $\mathcal{B}$ occurred, and show that under this assumption, $(\alpha_1,\dots,\alpha_k)$ indeed $\delta$-approximates $G$.  
		First, observe that for every $1 \leq \ell \leq k-1$, we have
		\begin{equation}\label{eq:X_{ell} estimation}
		\begin{split}
		\frac{1}{n} \cdot \#\left\{v \in V(G) : \frac{d_G(v)}{n} \leq  \frac{\ell}{k} + \gamma \right\} 
		&= 
		c(X_{\ell}) 
		\geq 
		\bar{c}(X_{\ell}) - \gamma 
		\\ &= 
		\frac{1}{s} \cdot \#\left\{1 \leq i \leq s : \frac{d_G(v_i)}{n} \leq  \frac{\ell}{k} + \gamma\right\} - \gamma 
		\\ &\geq 
		\frac{1}{s} \cdot \#\left\{1 \leq i \leq s : \frac{\bar{d}_i}{t} \leq  \frac{\ell}{k}\right\} - \gamma
		\\ &=
		\alpha_1 + \dots + \alpha_{\ell} - \gamma, 
		\end{split}
		\end{equation}
		where the first inequality holds because $\mathcal{B}$ occurred, and the second inequality holds because having $\frac{\bar{d}_i}{t} \leq \frac{\ell}{k}$ implies that $\frac{d_G(v_i)}{n} \leq  \frac{\ell}{k} + \gamma$, as $\mathcal{A}$ occurred. Note that \eqref{eq:X_{ell} estimation} also trivially holds for $\ell = k$, since the left-hand side equals $1$.
		The last equality follows from the definition of $\alpha_1,\dots,\alpha_k$. 
		
		Similarly, for every $2 \leq \ell \leq k$ we have 
		\begin{equation}\label{eq:Y_{ell} estimation}
		\begin{split}
		\frac{1}{n} \cdot \#\left\{v \in V(G) : \frac{d_G(v)}{n} \leq  \frac{\ell-1}{k} - \gamma \right\} 
		&= 
		c(Y_{\ell}) 
		\leq 
		\bar{c}(Y_{\ell}) + \gamma 
		\\ &= 
		\frac{1}{s} \cdot \#\left\{1 \leq i \leq s : \frac{d_G(v_i)}{n} \leq  \frac{\ell-1}{k} - \gamma\right\} + \gamma 
		\\ &\leq 
		\frac{1}{s} \cdot \#\left\{1 \leq i \leq s : \frac{\bar{d}_i}{t} \leq  \frac{\ell-1}{k}\right\} + \gamma 
		\\ &=
		\alpha_1 + \dots + \alpha_{\ell-1} + \gamma. 
		\end{split}
		\end{equation}
		Again, \eqref{eq:Y_{ell} estimation} trivially also holds for $\ell = 1$. 

	We now define subsets $A_1,\dots,A_k \subseteq V(G)$ as follows. Suppose without loss of generality that $V(G) = [n]$ and $d_G(1) \leq \dots \leq d_G(n)$ (otherwise simply relabel the vertices of $G$). For each $1 \leq \ell \leq k$, define 
	$A_{\ell} = \big\{x \in [n] : 
	(\alpha_1 + \dots + \alpha_{\ell-1} + \gamma)n + 1 \leq x \leq
	(\alpha_1 + \dots + \alpha_{\ell} - \gamma)n \big\}$ 
	(some of these sets might be empty). Then 
	$|A_{\ell}| = \max\left\{ 0, (\alpha_{\ell} - 2\gamma)n \right\}$ for every $1 \leq \ell \leq k$, and $A_1,\dots,A_k$ are pairwise disjoint.
	By \eqref{eq:X_{ell} estimation}, there are at least $(\alpha_1 + \dots + \alpha_{\ell} - \gamma)n$ vertices $v \in V(G)$ with $d_G(v) \leq (\frac{\ell}{k} + \gamma) \cdot n$, which implies that $d_G(v) \leq (\frac{\ell}{k} + \gamma) \cdot n$ for every $v \in A_{\ell}$ (as the vertices of $G$ are ordered in an increasing order by their degrees). 
	Similarly, by \eqref{eq:Y_{ell} estimation}, there are at most $(\alpha_1 + \dots + \alpha_{\ell-1} + \gamma)n$ vertices $v \in V(G)$ with $d_G(v) \leq (\frac{\ell-1}{k} - \gamma) \cdot n$, which implies that $d_G(v) > (\frac{\ell-1}{k} - \gamma) \cdot n$ for every $v \in A_{\ell}$. Observe also that 
	$
	\sum_{\ell = 1}^{k}{|A_i|} \geq \sum_{\ell = 1}^{k}{(\alpha_{\ell} - 2\gamma)n} = n - 2k\gamma n.
	$
	Hence, $Z := V(G) \setminus (A_1 \cup \dots \cup A_k)$ has size at most $2k\gamma n$. 
	
	To conclude, we compare the degree sequence of $G$ with the sequence $d(n,\alpha)$, with the goal of showing that $\alpha = (\alpha_1,\dots,\alpha_k)$ indeed $\delta$-approximates $G$. Recall that for each $1 \leq \ell \leq k$, the sequence $d(n,\alpha)$ has exactly $\alpha_{\ell}n$ coordinates equal to $\frac{2\ell-1}{2k}n$. By comparing $|A_{\ell}|$ of these coordinates to the vertices of $A_{\ell}$ (for each $1 \leq \ell \leq k$), and comparing the remaining coordinates to the vertices of $Z$, we see that the $\ell_1$-distance between $d(n,\alpha)$ and (a suitable permutation of) the degree sequence of $G$ is at most
	\begin{align*}
	\sum_{\ell = 1}^{k}
	\sum_{v \in A_{\ell}}{\left| d_G(v) - \frac{2\ell-1}{2k}n\right|} 
	+ |Z|n 
	&\leq 
	\sum_{\ell = 1}^{k}{|A_{\ell}| \cdot \left( \frac{1}{2k} + \gamma \right)n} + |Z|n 
	\leq 
	\left( \frac{1}{2k} + \gamma \right)n^2 + |Z|n 
	\\ &\leq 
	\left( \frac{1}{2k} + (2k+1)\gamma \right)n^2 \leq \delta n^2.
	\end{align*}
	Here, the first inequality follows from the fact that 
	$(\frac{\ell-1}{k} - \gamma) \cdot n < d_G(v) \leq 
	(\frac{\ell}{k} + \gamma) \cdot n$ for every $v \in A_{\ell}$ and 
	$1 \leq \ell \leq k$, and the last inequality follows from our choice of $k$ and $\gamma$. This completes the \nolinebreak proof. 
	\end{proof}

	Recall that a degree-sequence property is a set of realizable sequences which is closed under permuting the coordinates, and that the graph property $\mathcal{P}(\mathcal{D})$ corresponding to a degree-sequence property $\mathcal{D}$ is the set of all graphs whose degree sequence is in $\mathcal{D}$. 
	Recall that the normalized $\ell_1$-distance between sequences $x = (x_1,\dots,x_n)$ and $y = (y_1,\dots,y_n)$ is defined as $\ell_1(x,y) := \frac{1}{n^2} \cdot \sum_{i = 1}^{n}|x_i - y_i|$. 
	The following is the precise form of Theorem \ref{thm:main} that we will prove.
	\begin{theorem}\label{thm:testing}
		Let $\mathcal{D}$ be a degree-sequence property. Suppose that there is an algorithm $\mathcal{A}$ which, given an input consisting of $\delta > 0$, an integer $n$ and a degree statistic $\alpha = (\alpha_1,\dots,\alpha_k)$, returns ``Yes" if there is an $n$-term sequence $d^* \in \mathcal{D}$ such that 
		$\ell_1( d(n,\alpha), d^*) \leq \delta$, returns ``No" if 
		$\ell_1( d(n,\alpha), d^* ) > 2\delta$ for every $n$-term sequence $d^* \in \mathcal{D}$, and runs in time $f(n,\delta,k)$. Then $\mathcal{P}(\mathcal{D})$ is testable with query complexity $q(\varepsilon) = \tilde{O}(\varepsilon^{-16})$ and running time 
		$f\big( n,\Omega(\varepsilon^2),O(\varepsilon^{-2}) \big) + \tilde{O}(\varepsilon^{-16})$. 
	\end{theorem}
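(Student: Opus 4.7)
The plan is to reduce testing $\mathcal{P}(\mathcal{D})$ to the task solved by $\mathcal{A}$, using Lemma \ref{lem:degree statistic} to supply $\mathcal{A}$ with a degree statistic that faithfully represents the input graph. Fix $\delta := c\varepsilon^2$, where $c > 0$ is chosen small enough that the $O(\sqrt{3\delta})$ bound of Theorem \ref{thm:distance} is at most $\varepsilon$. On input $(G,\varepsilon)$, the tester invokes Lemma \ref{lem:degree statistic} with proximity $\delta$ to obtain a degree statistic $\alpha = (\alpha_1,\dots,\alpha_k)$ with $k = \lceil 1/\delta \rceil = O(\varepsilon^{-2})$, then runs $\mathcal{A}$ on input $(\delta, n, \alpha)$, and accepts if and only if $\mathcal{A}$ returns ``Yes''. (For the degenerate regime $n < (3\delta)^{-2} = O(\varepsilon^{-4})$, where the hypothesis $n \geq (3\delta)^{-2}$ of Theorem \ref{thm:distance} fails, the tester instead queries all $\binom{n}{2} = O(\varepsilon^{-8})$ edges of $G$, computes $d_G$ exactly and calls $\mathcal{A}$ on the corresponding exact statistic; this is within the claimed bounds.)

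For the analysis, condition throughout on the event $\mathcal{E}$ that $\alpha$ $\delta$-approximates $G$, which by Lemma \ref{lem:degree statistic} has probability at least $2/3$; on $\mathcal{E}$, some permutation of $d_G$ has normalized $\ell_1$-distance at most $\delta$ from $d(n,\alpha)$. For completeness, if $G \in \mathcal{P}(\mathcal{D})$ then $d_G \in \mathcal{D}$ and, since $\mathcal{D}$ is closed under permutation, this permuted copy is a witness $d^* \in \mathcal{D}$ with $\ell_1(d(n,\alpha), d^*) \leq \delta$, so $\mathcal{A}$ must return ``Yes''. For soundness I argue the contrapositive: if $\mathcal{A}$ returns ``Yes'' then some $d^* \in \mathcal{D}$ satisfies $\ell_1(d(n,\alpha), d^*) \leq 2\delta$, and combining this with the $\delta$-approximation via the triangle inequality, after permuting $d^*$ into alignment (which keeps it in $\mathcal{D}$), yields a realizable sequence $d^{**} \in \mathcal{D}$ with $\sum_i |d_G(i) - d^{**}_i| \leq 3\delta n^2$. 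Theorem \ref{thm:distance} then produces a graph $G'$ on $[n]$ with degree sequence $d^{**} \in \mathcal{D}$ and $|E(G) \triangle E(G')| = O(\sqrt{3\delta})\, n^2 \leq \varepsilon n^2$, so $G$ is $\varepsilon$-close to $\mathcal{P}(\mathcal{D})$.

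The query complexity is dominated by the call to Lemma \ref{lem:degree statistic}, giving $\tilde{O}(\delta^{-8}) = \tilde{O}(\varepsilon^{-16})$, and the running time adds the single call's cost $f(n, \delta, k) = f\bigl(n, \Omega(\varepsilon^2), O(\varepsilon^{-2})\bigr)$ to this. The argument is largely bookkeeping, and the only truly delicate point is quantitative: the square-root loss in Theorem \ref{thm:distance} forces $\delta = \Theta(\varepsilon^2)$, and it is this (rather than the sampling lemma per se) that is responsible for the $\varepsilon^{-16}$ in the query complexity; any improvement of Theorem \ref{thm:distance} to a linear dependence would immediately give an $\varepsilon^{-8}$ bound here by the same reduction.
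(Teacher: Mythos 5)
Your proposal is correct and follows essentially the same route as the paper: choose $\delta = \Theta(\varepsilon^2)$ so that the $O(\sqrt{3\delta})$ loss in Theorem \ref{thm:distance} is at most $\varepsilon$, obtain a $\delta$-approximating degree statistic via Lemma \ref{lem:degree statistic}, feed it to $\mathcal{A}$, and use the triangle inequality plus Theorem \ref{thm:distance} (and closure of $\mathcal{D}$ under permutation) for soundness, with the same $\tilde{O}(\varepsilon^{-16})$ accounting. The only cosmetic difference is how the small-$n$ case is dispatched, which the paper also treats only in passing.
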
 
	\begin{proof}
		Our tester for $\mathcal{P}(\mathcal{D})$ works as follows. 
		Let the input to the tester consist of a proximity parameter $\varepsilon > 0$ and an $n$-vertex graph $G$. Let $c$ be the implied constant in the $O$-notation in Theorem \ref{thm:distance}, and set $\delta := \frac{1}{3}(\varepsilon/c)^2 = \Omega(\varepsilon^2)$. 
		We assume that $n \geq \delta^{-2}$; otherwise the tester can just query the entire graph.
		The tester begins by applying the algorithm given by Lemma \ref{lem:degree statistic} and thus obtaining a degree statistic $\alpha = (\alpha_1,\dots,\alpha_k)$ with $k = \lceil 1/\delta \rceil = O(\varepsilon^{-2})$ which $\delta$-approximates $G$ with probability at least $\frac{2}{3}$. This step costs $\tilde{O}(\varepsilon^{-16})$ in both number of queries and run-time (as guaranteed by Lemma \ref{lem:degree statistic}). Next, the tester invokes the algorithm $\mathcal{A}$ with input $\delta$, $n$ and $(\alpha_1,\dots,\alpha_k)$, and accepts if and only if the output of $\mathcal{A}$ is ``Yes".
		
		Let us show that the above is a valid tester for $\mathcal{P}(\mathcal{D})$. To this end, we will prove that if $(\alpha_1,\dots,\alpha_k)$ \linebreak $\delta$-approximates $G$ (which happens with probability at least $2/3$), then the tester works correctly. Suppose first that $G \in \mathcal{P}(\mathcal{D})$, and let us denote by $d^* \in \mathbb{N}^n$ the degree sequence of $G$, noting that $d^* \in \mathcal{D}$. Since $\alpha = (\alpha_1,\dots,\alpha_k)$ $\delta$-approximates $G$, after possibly permuting the vertices of $G$ we have 
		$\ell_1( d(n,\alpha), d^*) \leq \delta$. Therefore, the output of $\mathcal{A}$ must be ``Yes", and the tester hence accepts, \nolinebreak as \nolinebreak required. 
		
		Suppose now that $G$ is $\varepsilon$-far from $\mathcal{P}(\mathcal{D})$. We claim that in this case, we must have $\ell_1( d(n,\alpha), d^* ) > 2\delta$ for every $n$-term sequence $d^* \in \mathcal{D}$. This would imply that $\mathcal{A}$ outputs ``No", and hence the tester rejects, as required. So let us fix any $n$-term sequence $d^* = (d^*_1,\dots,d^*_n) \in \mathcal{D}$, and suppose by contradiction that $\ell_1( d(n,\alpha), d^* ) \leq 2\delta$. Let $d^G = (d_G(1),\dots,d_G(n))$ denote the degree sequence of $G$ (we again assume that $V(G) = [n]$). Since $\alpha = (\alpha_1,\dots,\alpha_k)$ $\delta$-approximates $G$, after possibly permuting the vertices of $G$ we have $\ell_1(d^G,d(n,\alpha)) \leq \delta$. Now, by the triangle inequality, we have 
		$$
		\frac{1}{n^2} \cdot \sum_{i = 1}^{n}{|d_G(i) - d^*_i|} = 
		\ell_1(d^G,d^*) \leq 
		\ell_1(d^G,d(n,\alpha)) + \ell_1(d(n,\alpha), d^*) \leq 3\delta.
		$$ 
		By Theorem \ref{thm:distance}, there is a graph $G^*$ with degree sequence $d^*$ such that 
		$|E(G^*) \triangle E(G)| \leq c \cdot (3\delta)^{1/2} = \varepsilon$ (here we used our choice of $c$ and $\delta$). But $G^* \in \mathcal{P}(\mathcal{D})$ (as $d^* \in \mathcal{D}$), which implies that $G$ is $\varepsilon$-close to $\mathcal{P}(\mathcal{D})$, a contradiction. This completes the proof. 
	\end{proof}

	Note that if $\mathcal{D}$ contains only a single $n$-term sequence $d^*$, then the algorithm $\mathcal{A}$ simply needs to compute the $\ell_1$-distance between $d(n,\alpha)$ and $d^*$. If $d^*$ has only a constant number of distinct terms (and, in particular, if $d^* = (r,\dots,r)$ for some $r$), then this computation can be done in time independent \nolinebreak of \nolinebreak $n$.

\end{document}